\def\xnewpage{} % in final version
\newtheorem{Theorem}{Theorem}
\newtheorem{Cor}{Corollary}
\newtheorem{Prop}{Proposition}
\def\R{\mathbb{R}}
\def\d{\partial}
\def\w#1{\widetilde{#1}}
\newcommand{\fw}{\ensuremath{\;|\;}}
\newcommand{\E}{\ensuremath{\mathcal{E}}}
\newcommand{\T}{\ensuremath{\mathcal{T}}}
\newcommand{\F}{Q}
\DeclareMathOperator{\Div}{div}
\DeclareMathOperator{\Flux}{Flux}
\DeclareMathOperator{\grad}{grad}
\begin{document}
%%%%%%%%%%%%%%%%%%%%%%%%%%%%%%%%%%%%%%%%%%%%%%%%%%%%%%%%%%%%%%%%%%%%%%%%%%

\title[Decay estimates for nonlinear wave equations]
{Large data pointwise decay for defocusing semilinear wave equations}

\author{Roger Bieli}
\author{Nikodem Szpak}
\address{Max-Planck-Institut f\"{u}r Gravitationsphysik,
Albert-Einstein-Institut\\ Am M\"uhlenberg~1\\ 14476 Golm, Germany}
\date{\today}

\begin{abstract}
We generalize the pointwise decay estimates for large data solutions of the defocusing semilinear wave equations which we obtained earlier under restriction to spherical symmetry. Without the symmetry the conformal transformation we use provides only a weak decay. This can, however, in the next step be improved to the optimal decay estimate suggested by the radial case and small data results.
This is the first result of that kind.
\end{abstract}

\maketitle

%%%%%%%%%%%%%%%%%%%%%%%%%%%%%%%%%%%%%%%%%%%%%%%%%%%%%%%%%%%%%%%%%%%%%%%%%%
\section{Introduction}
%%%%%%%%%%%%%%%%%%%%%%%%%%%%%%%%%%%%%%%%%%%%%%%%%%%%%%%%%%%%%%%%%%%%%%%%%%

This Article builds up on our previous result \cite{BSzpak-Defocus} where we considered  wave equations with the defocusing nonlinearity
\begin{equation} \label{eq:wave-p}
  \Box \phi = -|\phi|^{p-1}\phi, \qquad\qquad \Box \equiv \d_t^2  - \Delta, 
\end{equation}
in $n=3$ spatial dimensions restricted to spherical symmetry. Here, by generalizing the technique we can remove the symmetry condition and prove the same decay result
\begin{equation} \label{eq:main-estimate}
  |\phi(t,x)| \leq \frac{C}{(1+t+|x|)(1+t-|x|)^{p-2}}
\end{equation}
with some constant $C$ depending only on initial data and the power $3\leq p<5$. For the initial data $(\phi_0,\phi_1)$ we assume the regularity
\begin{align} \label{eq:initdata}
  (\phi, \d_t\phi)_{t=1} &= (\phi_0,\phi_1) \in C^3(\R^3) \times C^2(\R^3),
\end{align}
implying existence of classical solutions $\phi\in C^2(\R\times\R^3)$, and compact support\footnote{Symmetries of the equation \eqref{eq:wave-p} allow for mapping any compactly supported initial data onto the region $|x|\in[0,\alpha[$ at $t=1$.} in $|x|<\alpha:=\frac1{2}$ which, among others, guarantees that the positive definite energy
\begin{equation} \label{eq:gerg}
  E=\int_{\R^3} \left(\frac{1}{2}|\d_t \phi|^2 + \frac{1}{2}|\nabla
  \phi|^2+ \frac{1}{p+1}|\phi|^{p+1}\right)\,d^3 x
\end{equation}
is finite but not necessarily small.

Under these conditions J{\"o}rgens \cite{Joergens} has shown global existence, Pecher \cite{Pecher} uniform boundedness and Strauss \cite{Strauss_SemilinDecay} weak uniform decay  $1/t^{1-\epsilon}$. More references to similar results can be found in our previous publication on that equation \cite{BSzpak-Defocus}.
We are not aware of any stronger pointwise estimates existing in the literature.
While under spherical symmetry the decay rates were well motivated by the numerical analysis \cite{PB+TCh-private} and an asymptotic result for small initial data \cite{NS-PB_Tails} stating that for large $t$ and fixed $r$ the solution behaves like
\begin{equation*}
  \phi(t,r) = \frac{C}{t^{p-1}} + {\mathcal{O}}(t^{-p}),
\end{equation*}
such strong statements are lacking in the non-spherical case. We are only guided by our spherical result for big data \cite{BSzpak-Defocus} and a series of results giving pointwise decay estimates for small initial data \cite{Asakura, Strauss-T, NS-Tails}. In this context the estimate \eqref{eq:main-estimate} seems optimal.

Here, we combine the technique of conformal compactification developed by Choquet-Bruhat, Christodoulou and others in \cite{ChoqBruh-P-Segal,Christodoulou-Quasilin,Baez-Segal-Zhou}, the boundedness result for big data of Pecher \cite{Pecher} and some pointwise estimates developed by Asakura \cite{Asakura} and one of us \cite{NS-DecayLemma}.

Our method has a few steps. First we perform a conformal compactification of the space-time. We map the double-null coordinates $u=t+r$, $v=t-r$ where $r=|x|$ and the angular coordinates $\theta, \varphi$ according to
\begin{align}\label{eq:uv-map}
  \w{u} &:= -\frac1{u},& \w{v} &:= -\frac1{v},& \w{\theta} &:= \theta,& \w{\varphi} &:= \varphi.
\end{align}
The conformal factor $\Omega := {\w{r}}/{r}$ multiplies the transformed
solution $ \w{\phi}(\w{u},\w{v}) := \Omega^{-1} \phi(u,v) $ which satisfies
the transformed wave equation
\begin{equation} \label{eq:wave-mapped}
 \w{\Box} \w{\phi} +  (\w{u}\w{v})^{p-3}\w{\phi} |\w{\phi}|^{p-1} = 0
\end{equation}
in a precompact region of spacetime. There, we are able to show
boundedness of some pseudo-energy flux which we further use to show the
uniform boundedness of $|\w{\phi}(\w{u},\w{v})|\leq \w{C}$. Finally,
the inverse transformation gives us the weak pointwise estimate
\begin{equation} \label{eq:weak-decay}
  |\phi(u,v)| \leq \w{C}\cdot \Omega = \frac{\w{C}}{uv}.
\end{equation}
In the radial case (cf. \cite{BSzpak-Defocus}) there was a family of conformal transformations available, being the mappings of the pair $(u,v)$, which could be adjusted to the power of the nonlinearity $p$ in such a way that the conformal factor $\Omega$ already delivered the desired optimal pointwise decay.
Without the spherical symmetry these transformations are no more conformal except the one which was related to $p=3$ and is now being used. Hence, the rate of decay in \eqref{eq:weak-decay} corresponds to that of the radial problem with $p=3$.

In order to improve it (for $p>3$ the decay is faster), we need a second step in which we make use of an integral representation of \eqref{eq:wave-p}, known as the Duhamel formula, and its estimates, briefly
\begin{equation}
  |\phi(t,x)| \leq \Box^{-1} |\phi|^p \lesssim \Box^{-1} \frac1{(uv)^p} \lesssim \frac1{(1+t+|x|)(1+t-|x|)^{p-2}}.
\end{equation}

\xnewpage
%%%%%%%%%%%%%%%%%%%%%%%%%%%%%%%%%%%%%%%%%%%%%%%%%%%%%%%%%%%%%%%%%%%%%%%%%%
\section{Conformal transformation} \label{sec:ct}
%%%%%%%%%%%%%%%%%%%%%%%%%%%%%%%%%%%%%%%%%%%%%%%%%%%%%%%%%%%%%%%%%%%%%%%%%%

On $\R \times \R^3$, let $\T^+:=\{(t,x) \fw 0\leq |x| < t\}$ and
$\T^-:=\{(t,x) \fw 0\leq |x| < -t\}$ denote the interior of the forward
and backward lightcone of the origin respectively, where the coordinates
$t:\R \times \R^3 \to \R$ and $x:\R \times \R^3 \to \R^3$ are the canonical
projections. Consider the map
\begin{equation} \label{eq:confmap}
 \Phi : \T^+ \to \T^-,\quad (t,x) \mapsto \frac{(-t,x)}{t^2-x^2}.
\end{equation}
It is an analytic bijection with analytic inverse \[ \Phi^{-1} : \T^-
\to \T^+,\quad (t,x) \mapsto \frac{(-t,x)}{t^2-x^2}.\]

If $\eta := dt^2 -
\delta_{ij}\,dx^i dx^j$ denotes the Minkowski metric on $\R \times \R^3$,
then its pullback $\Phi^* \eta$ by $\Phi$ satisfies \[ \Phi^* \eta =
\Omega^2 \eta \] on $\T^+$, where \[ \Omega := \frac1{t^2-x^2} \] as $\Phi^*
dt = (2t^2 \Omega -1)\Omega \,dt - 2tx_i \Omega^2 \,dx^i$ and $\Phi^* dx^i =
-2tx^i \Omega^2 \,dt + (\delta^i_j + 2x^i x_j \Omega)\Omega \,dx^j$. Hence,
the mapping $\Phi$ is also conformal. The conformal factor $\Omega$ is
analytic and has the property that $\Phi^* |x| = \Omega |x|$.

From the calculations in the spherically symmetric case, more precisely
from Equation~\eqref{eq:BS-Defocus:ct} in \cite{BSzpak-Defocus}, it easily
follows that if $h \in C^2(\T^-)$ is a twice continuously differentiable
function on $\T^-$, its pullback $\Phi^* h \in C^2(\T^+)$ and satisfies
\begin{equation} \label{eq:ct}
 \Box( \Omega \Phi^* h ) = \Omega^3 \Phi^* \Box h.
\end{equation}
In particular, let $\phi \in C^2(\T^+)$ be a classical solution of the
semilinear wave equation
\begin{equation} \label{eq:wav}
 \Box \phi + \phi |\phi|^{p-1} = 0
\end{equation}
for some $p>2$, then its conformal transformation $\psi:=\Phi_*(\Omega^{-1}
\phi) \in C^2(\T^-)$ satisfies
\begin{equation} \label{eq:ctwav}
 \Box \psi + (\Phi_* \Omega)^{p-3} \psi |\psi|^{p-1} = 0,
\end{equation}
where $\Phi_*$ denotes the push-forward by the diffeomorphism
$\Phi$. On the other hand, it follows directly from \eqref{eq:ct}
that if $\psi \in C^2(\T^-)$ is a classical solution of the transformed
equation~\eqref{eq:ctwav} on $\T^-$, the function $\phi:= \Omega \Phi^*
\psi \in C^2(\T^+)$ solves the original equation~\eqref{eq:wav} on $\T^+$.
Note that for $p\geq 3$, the non-linearity in Equation~\eqref{eq:ctwav} is
regular at the origin. In the case $p=3$ the functions $\phi$ and $\psi$
solve the \emph{same} Equation~\eqref{eq:wav}, which is in this case called
conformally invariant.

The method of improving a boundedness result for $\psi$ into a decay
estimate for $\phi$ by means of the conformal transformation described
above is directly related to the Morawetz vector field \cite{Morawetz62}
\begin{equation} \label{eq:moravf}
 Z := u^2 \d_u + v^2 \d_v
\end{equation}
with null coordinates $u:=t+|x|$ and $v:=t-|x|$ used in the context of
vector field methods to obtain decay. In fact, $Z$ is the pullback by
$\Phi$ of the time-like Killing vector field $\d_t$ on $\T^-$, \[ Z =
(t^2+x^2) \d_t + 2tx\cdot\nabla = \Phi^* \d_t. \]

\xnewpage
%%%%%%%%%%%%%%%%%%%%%%%%%%%%%%%%%%%%%%%%%%%%%%%%%%%%%%%%%%%%%%%%%%%%%%%%%%
\section{Local energy estimate} \label{sec:lee}
%%%%%%%%%%%%%%%%%%%%%%%%%%%%%%%%%%%%%%%%%%%%%%%%%%%%%%%%%%%%%%%%%%%%%%%%%%

Having in mind the transformed wave equation~\eqref{eq:ctwav}, the aim of this
section is to establish a local energy estimate for equations of the form
\begin{equation} \label{eq:vwav}
 \Box \psi + c(t,x) \psi |\psi|^{p-1} = 0
\end{equation}
with a non-negative function $c$ of class $C^2$.  An energy density which
is expected to be useful for such equations is \[ e := \frac12 (\d_t \psi)^2 +
\frac12 (\nabla \psi)^2 + \frac1{p+1} c |\psi|^{p+1}. \] While the associated
energy will, in general, not be conserved, it will be sufficient to prove
boundedness of $\psi$ on the relevant region of $\T^-$, provided the
function $c$ is monotonically decreasing in $t$ there.

Let $\psi \in C^2(\T^-)$ be a classical solution of \eqref{eq:vwav} on $\T^-$
for $p>2$.  Then the vector field $\E$ given by \[ \E:= e \d_t - \bigl(
\d_t \psi \, \nabla \psi \bigr)\cdot \grad \] is continuously differentiable and
\begin{align*}
 \Div \E & = \Bigl[ \Box \psi + c \psi |\psi|^{p-1} \Bigr] \d_t \psi
  + \frac1{p+1} (\d_t c) |\psi|^{p+1} \\
 & = \frac1{p+1} (\d_t c) |\psi|^{p+1}
\end{align*}
holds on $\T^-$. Assume furthermore that $\d_t c \leq 0$ is non-positive
then the same is true for $\Div \E$. This implies, recalling the assumption
that the initial data is compactly supported in an open ball of radius
$\alpha=1/2$ about the origin, that
\begin{align*}
 0 & \geq \int_S (\Div \E) \omega = \int_S d(i_\E \omega) = \int_{\d S}
     i_\E \omega = \int_{D_1(-1,0)} i_\E \omega - \int_{\Phi D_\alpha(1,0)}
     i_\E \omega \\
   & = \int_{D_1(-1,0)} (i_\E dt) dx^1 \wedge dx^2 \wedge dx^3 - \int_{\Phi
     D_\alpha(1,0)} i_\E \omega \\
   & = \int_{D_1(-1,0)} \biggl[ \frac12 (\d_t \psi)^2 + \frac12 (\nabla
     \psi)^2 + \frac1{p+1} c |\psi|^{p+1} \biggr] - \int_{D_\alpha(1,0)}
     \Phi^*(i_\E \omega)
\end{align*}
where $i_\E$ denotes the interior multiplication with $\E$, $\omega = dt
\wedge dx^1 \wedge dx^2 \wedge dx^3$ is the standard volume form and all
hypersurfaces shall be oriented by the transverse vector field $\d_t$. The
region $S \subset \T^-$ is defined as this part of the future of the image
$\Phi D_\alpha(1,0)$ which lies in the past of the hypersurface $\{ t =
-1 \}$. Here, $D_\rho(t,x):=\{ (t,y) \fw |y-x|<\rho\}$ is an open disk of
radius $\rho$ around $(t,x)$. As the integral over $D_\alpha(1,0)$ depends
only on the initial data $\phi_0$ and $\phi_1$ there is a constant $C$
such that \[ \biggl| \int_{D_\alpha(1,0)} \Phi^*(i_\E \omega) \biggr| \leq
C.\] Therefore it follows that
\begin{equation} \label{eq:Ebd}
 E_0:= \int_{D_1(-1,0)}  \biggl[ \frac12 (\d_t \psi)^2 + \frac12 (\nabla
 \psi)^2 + \frac1{p+1} c |\psi|^{p+1} \biggr] \leq C.
\end{equation}

The energy $E_0$ now controls past light-cone integrals of
$c|\psi|^{p+1}$ or, more generally, of the flux density \[ i_\E
\biggl[ dt + \frac{y-x}{|y-x|} \cdot \nabla \biggr] = \frac12 \biggl(
\frac{y-x}{|y-x|} \d_t \psi \biggr)^2 + \frac1{p+1} c|\psi|^{p+1} \] at any
point $(t,x) \in \F:=\T^- \cap \{-1 \leq t < 0\}$ according to the following
Proposition~\ref{lci}.  For notational convenience, let $K(t,x):=\{(s,y)
\fw -1\leq s < t,\ |y-x| \leq t-s \}$ be the solid truncated backward
light-cone at $(t,x)$ and $M(t,x):=\{(s,y) \fw -1\leq s < t,\ |y-x| =
t-s \}$ its mantle, where $(t,x)$ is a point in the relevant region $\F$.
\begin{Prop} \label{lci}
 Let $\psi \in C^2(\F)$ be a solution of \eqref{eq:vwav} with initial
 data supported in $D_1(-1,0)$ satisfying the estimate~\eqref{eq:Ebd}. If
 $\d_t c \leq 0$ on $\F$ then \[ \Flux(t,x) := \frac1{\sqrt 2} \int_{M(t,x)}
 \biggl[ \frac12 \biggl( \frac{y-x}{|y-x|} \d_t \psi \biggr)^2 + \frac1{p+1}
 c|\psi|^{p+1} \biggr] \leq E_0\] holds for any $(t,x) \in \F$.
\end{Prop}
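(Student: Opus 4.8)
The plan is to mimic the derivation of the energy bound~\eqref{eq:Ebd} above, with the region $S$ there replaced by the solid truncated backward light-cone $K(t,x)$. Since $\Div \E = \frac1{p+1}(\d_t c)|\psi|^{p+1}$ and the flux density through a null cone has already been computed to be manifestly non-negative, the claimed inequality should drop out of the divergence theorem once the two boundary contributions of $K(t,x)$ are identified.

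First I would check that $K(t,x) \subset \F$ whenever $(t,x) \in \F$, so that the hypothesis $\d_t c \leq 0$, hence $\Div \E \leq 0$, is available throughout $K(t,x)$: if $(s,y) \in K(t,x)$ then $|y| \leq |x| + |y-x| < -t + (t-s) = -s$ together with $-1 \leq s < t < 0$, i.e.\ $(s,y) \in \T^- \cap \{-1 \leq t < 0\} = \F$. The boundary of $K(t,x)$ has exactly two pieces of positive measure, the mantle $M(t,x)$ and the base disk $B := \{(-1,y) \fw |y-x| \leq t+1\}$, the apex $(t,x)$ being a single point; and the same estimate gives $B \subset D_1(-1,0)$, since $(-1,y) \in B$ forces $|y| \leq |x| + |y-x| < -t + (t+1) = 1$.

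Next I would evaluate the two boundary terms, orienting all hypersurfaces by the transverse field $\d_t$ as before. On $B$ one has $i_\E\omega = e\,dx^1 \wedge dx^2 \wedge dx^3$, so by $e \geq 0$ and $B \subset D_1(-1,0)$,
\[
  \int_B i_\E\omega = \int_{\{|y-x|\leq t+1\}} e\,dy \leq \int_{D_1(-1,0)} e\,dy = E_0 .
\]
On the null hypersurface $M(t,x) = \{(s,y) \fw s+|y-x| = t\}$ the Euclidean unit conormal is $\tfrac1{\sqrt2}\bigl(dt + \tfrac{y-x}{|y-x|}\cdot dx\bigr)$, whence $i_\E\omega$ restricted to $M(t,x)$ equals $\tfrac1{\sqrt2}\, i_\E\bigl[dt + \tfrac{y-x}{|y-x|}\cdot\nabla\bigr]$ times the induced surface measure; by the flux-density identity recorded just before the Proposition,
\[
  \int_{M(t,x)} i_\E\omega = \frac1{\sqrt2}\int_{M(t,x)} \biggl[ \frac12\biggl(\frac{y-x}{|y-x|}\d_t\psi\biggr)^{\!2} + \frac1{p+1}c|\psi|^{p+1} \biggr] = \Flux(t,x) ,
\]
the normalisation $1/\sqrt2$ being exactly what converts the $3$-form $i_\E\omega$ into the surface measure used to define $\Flux$.

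Finally I would combine these: the divergence theorem for $\E$ on $K(t,x)$, using $\Div \E \leq 0$ and the relative boundary sign as in the derivation of~\eqref{eq:Ebd}, gives
\[
  0 \;\geq\; \int_{K(t,x)} (\Div \E)\,\omega \;=\; \int_{M(t,x)} i_\E\omega - \int_B i_\E\omega \;=\; \Flux(t,x) - \int_{\{|y-x|\leq t+1\}} e\,dy \;\geq\; \Flux(t,x) - E_0 ,
\]
which is the assertion. The one genuinely delicate point is the treatment of the null mantle: fixing its orientation consistently with that of $B$, and pinning down the factor $1/\sqrt2$ relating the $3$-form $i_\E\omega$ to the surface measure entering $\Flux$. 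The remaining ingredients — the inclusion $K(t,x) \subset \F$, the estimate of the base term by $E_0$, and the sign of $\Div \E$ — are routine, the last one having already been established above.
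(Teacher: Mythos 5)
Your argument is correct and follows essentially the same route as the paper: apply the divergence theorem to $\E$ on the truncated cone $K(t,x)$, identify the mantle contribution with $\Flux(t,x)$ via the $1/\sqrt2$-normalized null flux density, bound the base-disk term by $E_0$, and discard $\int_{K(t,x)}\Div\E\leq 0$. The only difference is that you spell out the inclusions $K(t,x)\subset \F$ and $D_{1+t}(-1,x)\subset D_1(-1,0)$ and the non-negativity of $e$, which the paper uses implicitly.
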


\begin{proof}
Fix $(t,x) \in \F$. Since $\E$ is continuously differentiable, direct
application of Stokes's theorem yields
\begin{align*}
 \int_{K(t,x)} \Div \E & = \int_{M(t,x)} i_\E \omega - \int_{D_{1+t}(-1,x)}
  i_\E \omega \\
 & = \frac1{\sqrt 2} \int_{M(t,x)} i_\E \biggl[ dt + \frac{y-x}{|y-x|}
  \cdot \nabla \biggr] - \int_{D_{1+t}(-1,x)} i_\E dt \\
 & = \frac1{\sqrt 2} \int_{M(t,x)} \biggl[ \frac12 \biggl( \frac{y-x}{|y-x|}
  \d_t \psi \biggr)^2 + \frac1{p+1} c|\psi|^{p+1} \biggr] -
  \int_{D_{1+t}(-1,x)} e \\
 & = \Flux(t,x) - \int_{D_{1+t}(-1,x)} e.
\end{align*}
Hence, it follows that
\begin{align*}
 \Flux(t,x) & = \int_{D_{1+t}(-1,x)} e + \int_{K(t,x)} \Div \E \\
 & \leq \int_{D_1(-1,0)} \biggl[ \frac12 (\d_t \psi)^2 + \frac12 (\nabla
  \psi)^2 + \frac1{p+1} c |\psi|^{p+1} \biggr] + \int_{K(t,x)} \Div \E \\
 & \leq E_0.
\end{align*}
\end{proof}

\xnewpage
%%%%%%%%%%%%%%%%%%%%%%%%%%%%%%%%%%%%%%%%%%%%%%%%%%%%%%%%%%%%%%%%%%%%%%%%%%
\section{Boundedness and weak decay}
%%%%%%%%%%%%%%%%%%%%%%%%%%%%%%%%%%%%%%%%%%%%%%%%%%%%%%%%%%%%%%%%%%%%%%%%%%

Using the local energy estimate established in Proposition~\ref{lci} the
boundedness of $\psi$ in the region $\F=\T^- \cap \{-1 \leq t < 0\}$ is
quite immediate. The argument is exactly the same as in \cite{BSzpak-Defocus}
and goes back to Pecher \cite{Pecher}.

\begin{Theorem} \label{bound}
 Let $\psi \in C^2(\F)$ be a solution of \eqref{eq:vwav} with $2 <
 p < 5$ and initial data supported in $D_1(-1,0)$ satisfying the
 estimate~\eqref{eq:Ebd}. If $c \geq 0$ is uniformly bounded and $\d_t c
 \leq 0$ on $\F$ then $\psi$ is uniformly bounded.
\end{Theorem}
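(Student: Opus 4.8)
The plan is to run Pecher's boundedness argument: represent $\psi$ on backward light cones via the Kirchhoff formula, absorb the nonlinearity into a small power of $\sup|\psi|$ using H\"older's inequality, and close a fixed‑point inequality, controlling the light‑cone integral of $c|\psi|^{p+1}$ by the flux bound of Proposition~\ref{lci}. Concretely, fix $(t,x)\in\F$ and set $K(t,x):=\{(s,y)\fw -1\le s\le t,\ |y-x|\le t-s\}$, the solid truncated backward light cone. Since $|x|<-t$, every $(s,y)\in K(t,x)$ obeys $|y|\le|x|+(t-s)<-s$ with $s\in[-1,t]\subset[-1,0)$, so $K(t,x)$ is a \emph{compact} subset of $\F$; as $\psi\in C^2(\F)$, the number $M:=\sup_{K(t,x)}|\psi|$ is finite, and $K(s,y)\subseteq K(t,x)$ whenever $(s,y)\in K(t,x)$. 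Writing \eqref{eq:vwav} as $\Box\psi=-c\,\psi|\psi|^{p-1}$ and invoking the Kirchhoff representation with Cauchy surface $\{t=-1\}$ gives, for each $(s,y)\in K(t,x)$,
\[
 \psi(s,y)=\psi_{\mathrm{lin}}(s,y)-\frac1{4\pi}\int_{D_{s+1}(-1,y)}\frac{\bigl(c\,\psi|\psi|^{p-1}\bigr)(s-|z-y|,z)}{|z-y|}\,d^3z ,
\]
where $\psi_{\mathrm{lin}}$ is the free evolution of the Cauchy data of $\psi$ on $\{t=-1\}$; these data are of class $C^2\times C^1$ and supported in $D_1(-1,0)$, so $|\psi_{\mathrm{lin}}|\le C_0$ with $C_0$ depending only on the initial data. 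The points $(s-|z-y|,z)$ occurring in the integral lie on the mantle $M(s,y)\subseteq K(t,x)$, so $|\psi|\le M$ there.

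Next I would estimate the integral. Choose an exponent $q$ with $p-1<q<\tfrac{2(p+1)}{3}$; such a $q$ exists \emph{precisely because} $p<5$, and then $0<p-q<1$ (indeed $q<\tfrac{2(p+1)}{3}<p$ for $p>2$). Using $|\psi|^p\le M^{p-q}|\psi|^q$ and H\"older's inequality with the conjugate exponents $\tfrac{p+1}{q}$ and $\tfrac{p+1}{p+1-q}$,
\[
 \int_{D_{s+1}(-1,y)}\frac{c|\psi|^p}{|z-y|}\,d^3z\le M^{p-q}\!\left(\int c|\psi|^{p+1}\,d^3z\right)^{\!\frac{q}{p+1}}\!\left(\int_{D_{s+1}(-1,y)}\frac{c}{|z-y|^{\frac{p+1}{p+1-q}}}\,d^3z\right)^{\!\frac{p+1-q}{p+1}}.
\]
By Proposition~\ref{lci} (its nonlinear term, written as an integral over the base disk) together with \eqref{eq:Ebd}, the quantity $\int_{D_{s+1}(-1,y)}c|\psi|^{p+1}\,d^3z$ is bounded by a fixed multiple of $E_0\le C$, uniformly in $(s,y)$, so the first factor is at most a constant. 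In the second factor $c$ is bounded, $D_{s+1}(-1,y)\subseteq D_1(-1,0)$ is a fixed ball, and $\tfrac{p+1}{p+1-q}<3$ \emph{exactly because} $q<\tfrac{2(p+1)}{3}$, so the singular integral converges to a constant independent of $(s,y)$. Hence $\int_{D_{s+1}(-1,y)}\tfrac{c|\psi|^p}{|z-y|}\,d^3z\le C_1 M^{p-q}$ with $C_1=C_1(p,q,C,\sup c)$, and therefore $|\psi(s,y)|\le C_0+\tfrac{C_1}{4\pi}M^{p-q}$ for all $(s,y)\in K(t,x)$.

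Taking the supremum over $K(t,x)$ yields $M\le C_0+\tfrac{C_1}{4\pi}M^{p-q}$, and since $0<p-q<1$ the left side eventually dominates the right, so this forces $M\le C^*$, the largest root of $m=C_0+\tfrac{C_1}{4\pi}m^{p-q}$, which depends only on the initial data, on $C$, on $\sup c$ and on $p$. As $(t,x)\in K(t,x)$ was an arbitrary point of $\F$, this gives $|\psi|\le C^*$ on $\F$, the asserted uniform bound. The delicate step — and the one that pins the argument to $2<p<5$ — is the choice of $q$: it must be large enough ($q>p-1$) for the leftover power $M^{p-q}$ of the unknown to be subcritical, yet small enough ($q<\tfrac{2(p+1)}{3}$) for the Kirchhoff kernel $|z-y|^{-(p+1)/(p+1-q)}$ to remain locally integrable in three dimensions, and these two conditions are simultaneously satisfiable exactly on that range of $p$. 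Everything else — the compactness of $K(t,x)$ in $\F$, the validity of the Kirchhoff formula for the $C^2$ solution $\psi$, the bound on $\psi_{\mathrm{lin}}$, and the identification of the flux with an integral over the base disk — is routine.
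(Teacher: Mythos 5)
Your proof is correct and follows essentially the same route as the paper's (Pecher's argument): the Kirchhoff/Duhamel representation, a H\"older estimate that leaves a sublinear power of $\sup|\psi|$ together with an integrable kernel --- possible exactly because $2<p<5$ --- and control of the backward-cone integral of $c|\psi|^{p+1}$ by the flux bound of Proposition~\ref{lci}. The only differences are cosmetic: you parametrize the H\"older step with $q\in\bigl(p-1,\tfrac{2(p+1)}{3}\bigr)$ after first pulling out $M^{p-q}$, whereas the paper takes $q\in\bigl(\tfrac32,\tfrac{p+1}{p-1}\bigr)$ and extracts the sup-power afterwards, and you close the inequality with the supremum over the compact truncated cone $K(t,x)$ rather than over time slices, which makes the a priori finiteness of that supremum immediate.
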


\begin{proof}
Let $\psi_0 \in C^2(\F)$ be a classical solution of the homogeneous
equation $\Box \psi_0 = 0$ with the same initial data as $\psi$. Then,
for a fixed $(t,x) \in \F$, it holds that \[ |\psi-\psi_0|(t,x)
\leq \frac1{4\pi} \int_{-1}^t \int_{\partial B_{t-s}(x)} \frac{c(s,y)
|\psi|^p(s,y)}{t-s}\,dy\,ds. \] Due to the fact that $2 < p < 5$ there
exists a $q$ with $3/2 < q < (p+1)/(p-1)$. Changing variables and applying
H\"older's inequality yields
\begin{equation} \label{eq:hoelder}
\begin{split}
 \lefteqn{ \int_{-1}^t \int_{\partial B_{t-s}(x)} \frac{c
  |\psi|^p(s,y)}{t-s}\,dy\,ds} \quad \\
 & = \int_{B_{1+t}(x)} \frac{c |\psi|^p(t-|y-x|,y)}{|y-x|}\,dy \\
 & \leq \biggl[ \int_{B_{1+t}(x)} c^q |\psi|^{pq}(t-|y-x|,y) \,dy
  \biggr]^\frac1q \biggl[ \int_{B_{1+t}(x)} |y-x|^{-\frac q{q-1}} \,dy
  \biggr]^\frac{q-1}q.
\end{split}
\end{equation}
Consider the first integral. Since $q>1$ and $c$ is bounded, so is
$c^{q-1}$. Furthermore, $0<pq-(p+1)<q$, so that
\begin{align*}
 \lefteqn{ \int_{B_{1+t}(x)} c^q |\psi|^{pq}(t-|y-x|,y) \,dy } \quad \\
 & \leq C \|\psi\|^{pq-(p+1)}_{L^\infty(M(t,x))} \int_{B_{1+t}(x)} c
  |\psi|^{p+1}(t-|y-x|,y) \,dy \\
 & \leq C \biggl( \sup_{-1\leq \tau\leq t} \|\psi(\tau,\cdot)\|_{L^\infty}
  \biggr)^{\gamma q} \frac1{\sqrt 2} \int_{M(t,x)} c |\psi|^{p+1},
\end{align*}
where $0<\gamma<1$ is such that $\gamma q = pq-(p+1)$.  By virtue of
Proposition~\ref{lci} \[ \frac1{\sqrt 2} \int_{M(t,x)} c |\psi|^{p+1} \leq
(p+1) \Flux(t,x) \leq C E_0. \] The second integral in \eqref{eq:hoelder}
can be calculated directly to give \[ \int_{B_{1+t}(x)} |y-x|^{-\frac q{q-1}}
\,dy = 4\pi \int_{0}^{1+t} r^\frac{q-2}{q-1}\,dr = 4\pi \frac{q-1}{2q-3}
(1+t)^\frac{q-1}{2q-3} \leq C \] because $q>3/2$. To sum up, the estimate
\begin{align*}
 |\psi-\psi_0|(t,x) & \leq C E_0^\frac1q \biggl( \sup_{-1\leq \tau\leq t}
  \|\psi(\tau,\cdot)\|_{L^\infty} \biggr)^\gamma
\end{align*}
holds true for any $(t,x) \in \F$. But since the solution $\psi_0$ of the
homogeneous equation is clearly bounded and $0 < \gamma < 1$ this estimate
implies the boundedness of $\psi$ itself uniformly on $\F$.
\end{proof}

As a short digression, note that the proof of Theorem~\ref{bound}
given above uses the boundedness of light-cone integrals of the quantity
$c|\psi|^{p+1}$ which is controlled by the ``potential'' part of the flux
defined in Proposition~\ref{lci}. Alternatively, following Shatah and Struwe
\cite{Shatah-Struwe}, the integral in \eqref{eq:hoelder} could have been
estimated by
\begin{equation} \label{eq:althoelder}
\begin{split}
 \lefteqn{ \int_{B_{1+t}(x)} \frac{c |\psi|^p(t-|y-x|,y)}{|y-x|}\,dy } \quad \\
 & \leq \biggl[ \int_{B_{1+t}(0)} c^2 |\psi|^{2(p-1)}(t-|y|,x+y) \,dy
  \biggr]^\frac12 \biggl[ \int_{B_{1+t}(0)} \frac{|\psi|^2(t-|y|,x+y)}{|y|^2}
  \,dy \biggr]^\frac12 .
\end{split}
\end{equation}
The first integral would then be controlled by \[(1+t)^{4-p}
\|\psi\|_{L^6(M(t,x))}^{2(p-1)}\] when $1<p\leq 4$ and by
\[\|\psi\|_{L^\infty(M(t,x))}^{2(p-4)} \|\psi\|_{L^6(M(t,x))}^6 \]
when $4<p<5$.  But now, contrarily, the ``kinetic'' part of the flux
could be used to estimate both the $L^6$-norm of $\psi$ on $M(t,x)$ by
Sobolev embedding as well as the second integral in \eqref{eq:althoelder}
by Hardy's inequality.

With the function $\psi$ bounded on $\F$ a decay of $\phi$ towards the
future follows directly.

\begin{Cor} \label{decay}
 Let $\phi$ be a classical solution of the wave equation~\eqref{eq:wav}
 for $3\leq p < 5$ with initial data $\phi_0 \in C^3(\R^3)$ and $\phi_1
 \in C^2(\R^3)$ given at $t=1$ and which exists globally towards the
 future. Assume that the support of $\phi_0$ and $\phi_1$ is contained within
 the open ball $B_\alpha(0)$ of radius $\alpha$ about the origin. Then there
 is a constant $C>0$ such that $\phi$ satisfies the decay estimate
 \begin{equation}\label{eq:main-estimate-Cor}
   |\phi(t,x)| \leq \frac C{(1+t+|x|)(1+t-|x|)}
 \end{equation}
 for all $t\geq 1$ and $x \in \R^3$.
\end{Cor}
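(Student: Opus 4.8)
The plan is to apply the conformal machinery of Sections~\ref{sec:ct}--\ref{sec:lee} to the given solution $\phi$ and then transform back. First I would reduce to the forward light cone of the origin: since $\phi_0$ and $\phi_1$ have compact support inside the \emph{open} ball $B_\alpha(0)$ with $\alpha=\frac12$, their supports lie in $\overline{B_{1-\beta}(0)}$ for some $\beta\in(\frac12,1]$, so finite speed of propagation forces $\phi(t,x)=0$ whenever $t\ge 1$ and $|x|>t-\beta$, where \eqref{eq:main-estimate-Cor} is trivial. It then remains to bound $\phi$ on the conical region $\{t\ge 1,\, |x|\le t-\beta\}\subset\T^+$, on which $v:=t-|x|\ge\beta>0$ stays bounded away from zero.

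Next I would pass to $\psi:=\Phi_*(\Omega^{-1}\phi)$. By Section~\ref{sec:ct} this lies in $C^2(\T^-)$ and solves \eqref{eq:ctwav}, an equation of the form \eqref{eq:vwav} with coefficient $c:=(\Phi_*\Omega)^{p-3}$. Since $\Phi^{-1}$ has the same form as $\Phi$, a short computation gives $(\Phi_*\Omega)(t,x)=t^2-x^2$ on $\T^-$, so $c(t,x)=(t^2-x^2)^{p-3}$. On $\F=\T^-\cap\{-1\le t<0\}$ one has $0<t^2-x^2\le t^2\le 1$, so --- this is where $p\ge 3$ enters --- $c$ is smooth, nonnegative and bounded by $1$, while $\d_t c=2(p-3)\,t\,(t^2-x^2)^{p-4}\le 0$ there because $t<0$. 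Using in addition the compact support of the data in $B_\alpha(0)$, the local energy estimate of Section~\ref{sec:lee} supplies \eqref{eq:Ebd}. All the hypotheses of Theorem~\ref{bound} are then in force (we have $3\le p<5$, and the support hypothesis is automatic since $D_1(-1,0)$ is the whole slice $\T^-\cap\{t=-1\}$), so $\psi$ is uniformly bounded on $\F$, say $|\psi|\le\widetilde{C}$.

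Transforming back, $\phi=\Omega\,\Phi^*\psi$ on $\T^+$, so $|\phi(t,x)|=\Omega(t,x)\,|\psi(\Phi(t,x))|\le\widetilde{C}\,\Omega(t,x)=\widetilde{C}/(t^2-x^2)$ at every point of $\Phi^{-1}(\F)=\{(t,x)\in\T^+:x^2\le t^2-t\}$. For $t\ge t^\ast:=\beta^2/(2\beta-1)$, which is finite and $\ge 1$, the whole slice $\{|x|\le t-\beta\}$ lies in $\Phi^{-1}(\F)$, so the bound $|\phi|\le\widetilde{C}/(t^2-x^2)$ holds on the support there; the remaining part of the support, $\{1\le t\le t^\ast,\, |x|\le t-\beta\}$, is compact, so there the $C^2$ function $\phi$ is bounded and, since $(1+t+|x|)(1+t-|x|)$ is bounded too, \eqref{eq:main-estimate-Cor} holds with a large enough constant. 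Finally, on $\{t\ge 1,\, |x|\le t-\beta\}$, writing $u:=t+|x|$, one has $u\ge v\ge\beta$, hence $uv\ge\beta^2$ and $(1+u)(1+v)=1+u+v+uv\le(1+\beta^{-1})^2\,uv$; combined with $|\phi|\le\widetilde{C}/(uv)$ on $\Phi^{-1}(\F)$ this yields \eqref{eq:main-estimate-Cor} on all of the support.

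The step I expect to carry the real content is verifying the hypotheses of Theorem~\ref{bound}: once one has $\Phi_*\Omega=t^2-x^2$, the transformed potential is $(t^2-x^2)^{p-3}$, and its nonnegativity, boundedness and monotonicity $\d_t c\le 0$ on $\F$ all hold precisely because $p\ge 3$, which is what makes the scheme work. The only slightly delicate point is the geometric matching near the light cone: $\Phi^{-1}(\F)$ fails, by a collar that shrinks as $t\to\infty$, to cover $\{|x|\le t-\beta\}$ for $t<t^\ast$, so this bounded region has to be dispatched separately by continuity; the passage from $1/(uv)$ to $1/\bigl((1+u)(1+v)\bigr)$ is then routine and is the only place where the compact support of the data (through $v\ge\beta>0$) enters the final step.
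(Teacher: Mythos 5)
Your proposal is correct and follows essentially the same route as the paper: transform to $\psi=\Phi_*(\Omega^{-1}\phi)$, check that $c=(t^2-x^2)^{p-3}$ is nonnegative, bounded and has $\d_t c\leq 0$ on $\F$ precisely because $p\geq 3$, invoke the Section~\ref{sec:lee} energy bound and Theorem~\ref{bound}, and transform back using $\Omega=1/(uv)$ together with $v$ bounded below on the support. The only (harmless) cosmetic difference is that where the paper absorbs the region not covered by $\F$ by bounding $\psi$ on the compact set $S\subset\T^-$, you bound $\phi$ directly on the compact collar $\{1\leq t\leq t^{\ast},\,|x|\leq t-\beta\}$ in $\T^+$ --- an equivalent compactness step.
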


\begin{proof}
Given such a solution $\phi$ of class $C^2$, it was shown in
Section~\ref{sec:ct} that its conformal transformation $\psi =
\Phi_* (\Omega^{-1} \phi)$ is a classical solution of the wave
equation~\eqref{eq:vwav} on the future of $\Phi D_\alpha(1,0)$ in
$\T^-$ with \[ c = \Phi_* \Omega^{p-3} = (t^2-x^2)^{p-3} \] according to
equation~\eqref{eq:ctwav}.  So $c$ is certainly bounded on the future of
$\Phi D_\alpha(1,0)$ in $\T^-$ because $p\geq 3$ and \[ \d_t \bigl[ \Phi_*
\Omega^{p-3} \bigr] = 2 (p-3) t \Phi_* \Omega^{p-4}, \] which implies $\d_t
c \leq 0$ on $\T^-$ again by reason of $p\geq 3$. Moreover, as detailed
in Section~\ref{sec:lee}, the assumptions on the support of $\phi_0$ and
$\phi_1$ guarantee that the support of the transformed solution restricted
to $\{t=-1\}$ is compactly contained in $D_1(-1,0)$ and that therefore the
estimate~\eqref{eq:Ebd} holds. Thus, Theorem~\ref{bound} applies and yields
boundedness of $\psi$ on $\F$. Since $\psi$ is also bounded on the compact
region $S$ it follows that $\psi$ is bounded on the whole future of $\Phi
D_\alpha(1,0)$ in $\T^-$, say $|\psi| \leq C$. But then \[ |\phi(t,x)| =
\Omega(t,x) |\Phi^* \psi(t,x)| \leq C \Omega(t,x) \] on the whole future of
$D_\alpha(1,0)$ in $\T^+$.  In this region, $t+|x|\geq 1$ and $t-|x| \geq
1-\alpha$, so that there \[ \Omega(t,x) = \frac1{t^2-x^2} \leq \frac{2\Bigl(
1+\frac1{1-\alpha} \Bigr)}{(1+t+|x|)(1+t-|x|)}. \] Hence, the bound of
$|\phi|$ can be written in the regularized form \eqref{eq:main-estimate-Cor}.
\end{proof}

\xnewpage
%%%%%%%%%%%%%%%%%%%%%%%%%%%%%%%%%%%%%%%%%%%%%%%%%%%%%%%%%%%%%%%%%%%%%%%%%%
\section{Improvement of the decay estimate}
%%%%%%%%%%%%%%%%%%%%%%%%%%%%%%%%%%%%%%%%%%%%%%%%%%%%%%%%%%%%%%%%%%%%%%%%%%

The pointwise decay result of Corollary \ref{decay},
\begin{equation}
   |\phi(t,x)| \leq \frac C{(1+t+|x|)(1+t-|x|)},
\end{equation}
can be improved by applying this bound to the nonlinear term in the wave equation \eqref{eq:wave-p} and solving the wave equation by inverting the wave operator, i.e. $\phi=\Box^{-1} (-|\phi|^{p-1}\phi) + \chi_{\phi_0,\phi_1}$. Here, $\chi_{\phi_0,\phi_1}$ represents the contribution from the initial data \eqref{eq:initdata}, i.e. solves the linear wave equation $\Box \chi=0$ with $(\chi,\d_t\chi)|_{t=1}=(\phi_0,\phi_1)$ and is well-known to decay as $|\chi(t,x)|\leq C/t$. Due to the Huygens principle in three dimensions $\chi(t,x)$ is supported in the outgoing light-cone $1-\alpha \leq t-|x|\leq 1+\alpha$. Hence, its decay can be written as
\begin{equation}
   |\chi(t,x)| \leq \frac C{(1+t+|x|)(1+t-|x|)^q}
\end{equation}
with any power $q$ and some $C$ depending on $q$.
Since $\phi\in C^2$ is a classical solution and $\Box^{-1}$ is a positive measure on $\R\times\R^3$ we can estimate
\begin{equation}
  |\phi(t,x)| \leq \Box^{-1} |\phi|^p + |\chi(t,x)|
  \leq \Box^{-1} \frac{C}{(1+t+|x|)^p(1+t-|x|)^{p}} + \frac C{(1+t+|x|)(1+t-|x|)^q}.
\end{equation}
The inverse wave operator can be represented by the Duhamel integral formula and bounded pointwise. According to the Lemma 1 from \cite{NS-DecayLemma} we get for $p>2$
\begin{equation}
  \Box^{-1} \frac{1}{(1+t+|x|)^p(1+t-|x|)^{p}} \leq \frac C{(1+t+|x|)(1+t-|x|)^{p-2}}.
\end{equation}
Choosing $q=p-2$ we arrive at our main result
\begin{Cor}
 Under the assumptions of Corollary \ref{decay} there is a constant $C>0$ such that $\phi$ satisfies the improved decay estimate
 \begin{equation}
   |\phi(t,x)| \leq \frac C{(1+t+|x|)(1+t-|x|)^{p-2}}
 \end{equation}
 for all $t\geq 1$ and $x \in \R^3$.
\end{Cor}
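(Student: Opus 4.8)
The plan is to bootstrap from the weak decay estimate of Corollary~\ref{decay} using the Duhamel representation of the solution. First I would split $\phi$ into the linear evolution $\chi_{\phi_0,\phi_1}$ of the initial data and the Duhamel term $\Box^{-1}(-|\phi|^{p-1}\phi)$, so that $|\phi| \leq \Box^{-1}|\phi|^p + |\chi_{\phi_0,\phi_1}|$ since $\Box^{-1}$ is represented by integration against a positive measure (the surface measure on backward light-cones, divided by the spatial distance). For the linear part, the classical $1/t$ decay together with strong Huygens in three dimensions confines $\chi_{\phi_0,\phi_1}$ to the thin shell $1-\alpha \leq t-|x| \leq 1+\alpha$; on that shell $1+t-|x|$ is bounded above and below by constants, so $|\chi_{\phi_0,\phi_1}|$ can be absorbed into a bound of the form $C/[(1+t+|x|)(1+t-|x|)^q]$ for \emph{any} exponent $q$, in particular for $q=p-2$.

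The substantive step is the nonlinear term. I would feed the Corollary~\ref{decay} bound $|\phi(t,x)| \leq C/[(1+t+|x|)(1+t-|x|)]$ into the nonlinearity, so that $|\phi|^p \leq C/[(1+t+|x|)^p(1+t-|x|)^p]$, and then invoke Lemma~1 of \cite{NS-DecayLemma}, which for $p>2$ gives precisely
\[
 \Box^{-1}\,\frac{1}{(1+t+|x|)^p(1+t-|x|)^p} \leq \frac{C}{(1+t+|x|)(1+t-|x|)^{p-2}}.
\]
This is the weighted pointwise estimate for the inverse wave operator acting on functions that decay in both null directions; its proof reduces the Duhamel integral over the backward light-cone to an integral in null coordinates along the mantle and estimates it using the decay weights, and the hypothesis $p>2$ (equivalently $p-2>0$) is exactly what makes the relevant $v$-integral converge. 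Combining this with the bound on $\chi_{\phi_0,\phi_1}$ and choosing $q=p-2$ yields
\[
 |\phi(t,x)| \leq \frac{C}{(1+t+|x|)(1+t-|x|)^{p-2}}
\]
for all $t\geq 1$ and $x\in\R^3$, which is the claimed improved estimate.

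The main obstacle I anticipate is not any single estimate but checking that the hypotheses of the decay lemma are genuinely met: one must be careful that the weak decay bound holds on all of $\{t\geq 1\}$ with the correct regularization near the light-cone $t=|x|$ and near the axis, and that the Duhamel formula is valid for the $C^2$ classical solution with the stated support properties so that the positivity of the kernel can be used termwise. One should also note that the argument as written improves the decay in a single pass rather than iteratively; a second application would require re-establishing the intermediate bound $|\phi|\leq C/[(1+t+|x|)(1+t-|x|)^q]$ for $1 \leq q < p-2$ as input, and verifying that Lemma~1 of \cite{NS-DecayLemma} applied to $|\phi|^p$ with that weaker weight still closes at $q=p-2$ — but since the single pass from the conformal estimate already lands on the optimal exponent $p-2$, no iteration is needed here and the proof terminates.
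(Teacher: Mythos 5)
Your proposal is correct and follows essentially the same route as the paper: the same splitting into the linear evolution $\chi_{\phi_0,\phi_1}$ (handled via $1/t$ decay plus strong Huygens support in the shell) and the Duhamel term, the same insertion of the Corollary~\ref{decay} bound into $|\phi|^p$, and the same invocation of Lemma~1 of \cite{NS-DecayLemma} with $q=p-2$. No gaps to flag.
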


\xnewpage
%%%%%%%%%%%%%%%%%%%%%%%%%%%%%%%%%%%%%%%%%%%%%%%%%%%%%%%%%%%%%%%%%%%%%%%%%%
\section{Outlook}
%%%%%%%%%%%%%%%%%%%%%%%%%%%%%%%%%%%%%%%%%%%%%%%%%%%%%%%%%%%%%%%%%%%%%%%%%%

In some sense, a very similar problem, the linear wave equation with a strong positive potential
$$ \d_t^2 \phi - \Delta \phi + V(x) \phi = 0 $$
having prescribed decay at spatial infinity $V(x)\sim 1/|x|^k$ still lacks a sharp pointwise decay estimate. It also has a positive definite energy and can be conformally transformed to a form analogous to \eqref{eq:vwav}. However, the function $c(t,x)$ is no more regular at $t=|x|=0$. Nevertheless, we expect that our method can be extended to cover this weakly singular case, too.

\section*{Acknowledgments}
The authors want to express their gratitude to the Mathematical Institute in Oberwolfach and to the organizers of the workshop ``Mathematical Aspects of General Relativity'' (Fall 2009) for hospitality and creative atmosphere during the work on that project. One of the authors (NS) also wants to thank the organizers of the workshop ``Quantitative Studies of Nonlinear Wave Phenomena'' at the Erwin Schr{\"o}dinger International Institute for Mathematical Physics in Vienna (Winter 2010) during which this work has been finished.

%%%%%%%%%%%%%%%%%%%%%%%%%%%%%%%%%%%%%%%%%%%%%%%%%%%%%%%%%%%%%%%%%%%%%%%%%%%%%%%
% \bibliography{basename of .bib file}
\bibliography{biblio}
\bibliographystyle{alpha}

%%%%%%%%%%%%%%%%%%%%%%%%%%%%%%%%%%%%%%%%%%%%%%%%%%%%%%%%%%%%%%%%%%%%%%%%%%
\end{document}